%
%
%
%
%
%
\documentclass[reqno,a4paper,11pt]{amsart}
\usepackage{blindtext}


\setlength{\oddsidemargin}{0in} \setlength{\evensidemargin}{0in} \addtolength{\textwidth}{1.27in}
\setlength{\topmargin}{0in} \addtolength{\textheight}{0.63in}

\usepackage{hyperref}

\usepackage{enumerate}
\usepackage{verbatim}

\usepackage[all]{xy}
\usepackage[usenames,dvipsnames]{xcolor}
\usepackage{tikz}
\usepackage{tikz-cd}

\usepackage{pgfplots}
\usetikzlibrary{3d}
\usetikzlibrary{shapes,decorations.pathreplacing}
\usetikzlibrary{calc}
\usetikzlibrary{backgrounds}
\usetikzlibrary{automata}
\usetikzlibrary{positioning}
\usetikzlibrary{decorations.markings}
\usetikzlibrary{arrows}
\usetikzlibrary{scopes}
\usetikzlibrary{intersections}
\tikzstyle myBG=[line width=3pt,opacity=1]

\newcommand{\ZM}{{\mathbb{Z}M}}

\newcommand{\Z}{\mathbb{Z}}

\newcommand{\FPn}{{\rm FP}\sb n}

\newcommand{\FP}{{\rm FP}}

\definecolor{cof}{RGB}{219,144,71}
\definecolor{pur}{RGB}{186,146,162}
\definecolor{greeo}{RGB}{91,173,69}
\definecolor{greet}{RGB}{52,111,72}

\usepackage{amssymb}
\usepackage{amsgen}
\usepackage{amsmath}
\usepackage{amsthm}
\usepackage{mathrsfs}
\usepackage{cite}
\usepackage{amsfonts}

\hyphenation{mon-oid mon-oids}


\newcommand{\ov}[1]{\ensuremath{\overline {#1}}}


\newtheorem{Thm}{Theorem}
\newtheorem{Prop}[Thm]{Proposition}

\newtheorem{Lemma}[Thm]{Lemma}
{\theoremstyle{definition}
}
{\theoremstyle{remark}
}
{\theoremstyle{remark}
}
\newtheorem{Cor}[Thm]{Corollary}

{\theoremstyle{remark}
}

{\theoremstyle{remark}
}
{\theoremstyle{remark}
}
{\theoremstyle{remark}
}

\numberwithin{equation}{section}

\usepackage{graphicx}


\title[Free inverse monoids]{
Free inverse monoids are 
not \boldmath{$\mathrm{FP}_{2}$}
}
\subjclass[2010]{20M50, 20M18, 20M05, 20J05}

\keywords{
Free inverse monoid,
finitely presented, 
homological finiteness property.  
\\
\indent
This work was supported by the
EPSRC grant EP/N033353/1 `Special inverse monoids: subgroups, structure, geometry, rewriting systems and the word problem'. The second author was supported by  NSA MSP \#H98230-16-1-0047 and a PSC-CUNY award
This work was supported by the London Mathematical Society Research in Pairs (Scheme 4) grant 
(Ref:\ 41844), which funded a 6-day research visit of the second named author to the University of East Anglia (September 2019).
}

\begin{document}
\maketitle

\begin{center}
ROBERT D. GRAY
\footnote{School of Mathematics, University of East Anglia, Norwich NR4 7TJ, England.
Email \texttt{Robert.D.Gray@uea.ac.uk}.
}
and
BENJAMIN STEINBERG\footnote{
Department of Mathematics, City College of New York, Convent Avenue at 138th Street, New York, New York 10031,  USA.
Email \texttt{bsteinberg@ccny.cuny.edu}.}
\\
\end{center}

\begin{abstract}
We give a topological proof that a free inverse monoid on one or more generators is neither of type left-$\FP_2$ nor right-$\FP_2$. This strengthens a classical result of Schein that such monoids are not finitely presented as monoids.
\end{abstract}

\ 

Given how easy it is to prove that a group $G$ is finitely presented as a group if and only if it is finitely presented as a monoid, it is rather surprising  
that the same is not true of inverse monoids. 
Indeed it 
is a classical 
result of Schein~\cite{Schein1975} that free inverse monoids on a non-empty set of generators are not finitely presented as monoids.

Our goal in this paper is to prove the following stronger result about free inverse monoids.

\begin{Thm}\label{t:notfp2}
A free inverse monoid on one or more generators is neither of type left-$\FP_2$ nor right-$\FP_2$.
\end{Thm}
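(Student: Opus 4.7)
The plan is to prove non-$\FP_2$ via a topological/homological argument, strengthening Schein's classical non-finite-presentability of the free inverse monoid. Set $M = \mathrm{FIM}(X)$. Since the canonical inversion involution gives an anti-isomorphism $M \cong M^{\mathrm{op}}$, left-$\FP_2$ and right-$\FP_2$ coincide for $M$, so it suffices to disprove one of them. For infinite $X$ the monoid is not even finitely generated and the claim is immediate, so I assume $X$ is finite and work with the finite monoid generating set $A = X \cup X^{-1}$.

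The first step is to reformulate the property homologically: $M$ is left-$\FP_2$ if and only if the kernel of the left $\mathbb{Z}M$-linear map
\[
\partial_1 \colon \bigoplus_{a \in A} \mathbb{Z}M\, e_a \longrightarrow \mathbb{Z}M, \qquad e_a \longmapsto a - 1,
\]
is finitely generated as a left $\mathbb{Z}M$-module. Elements of $\ker\partial_1$ are most naturally viewed as $\mathbb{Z}$-linear combinations of closed edge-paths in the right Cayley graph $\Gamma(M,A)$, with the left $\mathbb{Z}M$-action translating closed paths by their source vertex. This is the homological counterpart of Schein's result.

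Next I would use Munn's theorem to expose the structure of $\Gamma(M,A)$: vertices are birooted finite subtrees of the Cayley graph of the free group $F(X)$, and the idempotent semilattice $E(M)$ corresponds to finite subtrees through the identity and is infinite of unbounded tree-height. The key structural ingredient is that every maximal subgroup of $M$ is trivial, so each $\gr$- and $\eL$-class of $M$ is a singleton; thus $M$ is structurally "sparse" away from its idempotents, and left multiplication acts very rigidly on Munn trees. The goal is to construct, for each finite subtree $T$ of sufficient complexity, a cycle $c_T$ in $\Gamma(M,A)$ whose idempotent support records $T$, producing an infinite family of elements in $\ker\partial_1$ indexed by subtrees of increasing height.

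The main obstacle --- the crux of the proof --- is the independence argument: showing that no finite subset of $\ker\partial_1$ generates the entire kernel as a left $\mathbb{Z}M$-module. The approach I envisage is to introduce a numerical invariant of chain elements (for example, the maximum tree-height occurring among the Munn trees appearing in the support) and to prove that this invariant is controlled under left $\mathbb{Z}M$-multiplication yet unbounded on the family $\{c_T\}$. The delicate point is that left multiplication by an element whose Munn tree contains $T$ can reshape idempotent supports in nontrivial ways, so the invariant must be chosen carefully enough to behave subadditively under the module action while remaining sensitive enough to distinguish the $c_T$. Once this confinement inequality is formalised, choosing $T$ of sufficiently large height yields a $c_T$ outside the left $\mathbb{Z}M$-span of any pre-specified finite family, contradicting finite generation.
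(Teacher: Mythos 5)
Your opening reductions are sound and match the paper's: the inversion anti-isomorphism collapses left- and right-$\FP_2$ into a single condition, and the reformulation of left-$\FP_2$ as finite generation of $\ker\partial_1$ (equivalently, of $H_1$ of the right Cayley graph) as a left $\mathbb{Z}M$-module is exactly the right first move. One simplification you miss: since $\FP_2$ passes to retracts (Pride), and the free monogenic inverse monoid is a retract of every $\mathrm{FIM}(X)$ with $X\neq\emptyset$, one may reduce at the outset to the single-generator case, where the Munn trees are just finite intervals in $\mathbb{Z}$ and the Cayley graph is far more tractable than for general finite $X$.

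From that point on, however, the proposal is a statement of intent rather than a proof, and the part you defer is precisely where all of the difficulty lives. You never define the cycles $c_T$, never define the numerical invariant, and never prove the ``confinement inequality''; you explicitly flag this as the crux and then stop. The danger you correctly identify --- that left multiplication can reshape Munn trees drastically (in the monogenic case, for instance, $x\cdot x^ny^{n+1}x^{n+1}=x^{n+1}$, collapsing a large cycle onto tree edges) --- is real, and overcoming it is the entire content of the argument: one must choose a specific prefix-closed set of normal forms (a right--left--right sweep of the Munn tree) to get a spanning tree of the Cayley graph, determine exactly which non-tree edges are transition edges between strongly connected components, assign those basis elements of $H_1$ the weight $k$ read off from $x^ny^kx^k\xrightarrow{\,\,x\,\,}x^{n+1}y^{k+1}x^{k+1}$ (with weight $0$ on edges inside strongly connected components), and then carry out a delicate case analysis, using explicit identities such as $yx^ny^k=x^{n-1}y^nx^{n-k}$, to verify that each term $W_k$ of the resulting exhaustive strictly increasing filtration is a submodule. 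Without an explicit invariant and a proof that it does not increase under the action, the claim that no finite set generates the kernel is unsupported. A further, minor but genuine, error: triviality of the maximal subgroups gives only that the $\mathscr{H}$-classes are singletons; the $\mathscr{R}$- and $\mathscr{L}$-classes of a free inverse monoid are certainly not singletons (e.g.\ $x$ and $xx^{-1}$ are $\mathscr{R}$-equivalent), and indeed the nontrivial strongly connected components of the Cayley graph, which are exactly the nontrivial $\mathscr{R}$-classes, play an essential role in the weight-zero part of the filtration.
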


The free inverse monoid is an object of central importance in inverse semigroup theory. 
Free inverse monoids were studied in detail in classical work of 
Munn \cite{Munn74} and Scheiblich \cite{Scheiblich73}.
It follows from this work that 
the word problem is decidable for this class.
For a general introduction to the theory of inverse monoids we refer the reader to~\cite{Lawson}.

Recall that a monoid $M$ is said to be of type left-$\FPn$ if there is a projective resolution $P = (P_i)_{i \geq 0}$ of the trivial left $\ZM$-module $\Z$ such that $P_i$ is finitely generated for $i \leq n$.
There is a dual notion of right-$\FPn$, and we say a monoid is of type $\FPn$ if it is both of type left- and right-$\FPn$.
It is well known (see e.g. \cite{Otto1997}) that every finitely presented monoid is of type 
left- and right-$\FP_2$. Hence an immediate corollary of Theorem~\ref{t:notfp2} is Schein's theorem  \cite{Schein1975} that free inverse monoids on a non-empty set of generators are not finitely presented.

\begin{Cor}\label{Schein1975}
Free inverse monoids on one or more generators are not finitely presented.
\end{Cor}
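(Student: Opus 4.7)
The proof of Corollary~\ref{Schein1975} is essentially immediate from Theorem~\ref{t:notfp2} combined with the standard homological characterization of finite presentability. My plan is to argue by contradiction: suppose that $M$ is a free inverse monoid on a non-empty set of generators and that $M$ is finitely presented as a monoid. Then by the well-known result cited in the excerpt (see \cite{Otto1997}), any finitely presented monoid is of type both left-$\FP_2$ and right-$\FP_2$. In particular, $M$ would be of type left-$\FP_2$ (and also of type right-$\FP_2$), which directly contradicts Theorem~\ref{t:notfp2}.

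Since the substantive work is entirely carried by Theorem~\ref{t:notfp2}, there is no real obstacle in the derivation of the corollary itself; it reduces to invoking a single citation. The only thing to double check is that the hypothesis of the cited result matches our setting, namely that we are working with monoids (rather than, say, groups or semigroups) and that the homological finiteness property is defined in terms of the trivial module $\Z$ over the monoid ring $\ZM$ as in the definition recalled in the excerpt; this matches the conventions of \cite{Otto1997} exactly. Thus the corollary follows in a single line once Theorem~\ref{t:notfp2} is established.
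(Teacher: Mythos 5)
Your proposal is correct and matches the paper's own derivation exactly: the paper obtains the corollary immediately from Theorem~\ref{t:notfp2} together with the cited fact from \cite{Otto1997} that every finitely presented monoid is of type left- and right-$\FP_2$. Nothing further is needed.
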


Since inverse monoids are isomorphic to their duals, it suffices to show that $M$ is not of type left-$\FP_2$, which henceforth shall be called simply $\FP_2$. Pride~\cite{Pride2006} showed that the class of monoids of type $\FP_2$ is closed under taking retracts.  Since the free monogenic inverse monoid $M$  is a retract of any free inverse monoid on a non-empty set of generators, it suffices to prove that $M$ is not of type $\FP_2$. Our aim is now to prove the following theorem.

\begin{Thm}\label{t:monogenic.case}
The free monogenic inverse monoid is not of type $\FP_2$.
\end{Thm}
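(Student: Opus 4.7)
The plan is to use the standard homological characterization of type left-$\FP_2$: a monoid $M$ is of type left-$\FP_2$ if and only if $M$ is finitely generated and, for any (equivalently, some) finite monoid generating set $A$, the ``relation module''
\[
K_A \;=\; \ker\Bigl(\bigoplus_{a\in A} \ZM\cdot e_a \longrightarrow I_M\Bigr), \qquad e_a \mapsto a-1,
\]
is finitely generated as a left $\ZM$-module, where $I_M$ is the augmentation ideal of $\ZM$. Equivalently, the first homology $H_1(\Gamma(M,A))$ of the right Cayley graph $\Gamma(M,A)$ (viewed as a $1$-dimensional CW complex, with the left $M$-action by left multiplication on vertices and edges) is finitely generated as a $\ZM$-module.

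For $M = \mathrm{FIM}(a)$ I take $A = \{a, b\}$ with $b = a^{-1}$. Elements of $M$ admit the Munn tree description: pairs $(T, v)$, where $T \subset \Z$ is a finite interval containing $0$ and $v \in T$, with right multiplication by $a$ (resp.\ $b$) extending $T$ and shifting $v$ by $+1$ (resp.\ $-1$). This yields an explicit picture of $\Gamma(M, A)$. The key infinite family comes from the commuting-idempotents relations: for each $m,n \geq 1$, both words $a^m b^m b^n a^n$ and $b^n a^n a^m b^m$ represent the idempotent with Munn tree $[-n, m]$ based at $0$. Hence the two corresponding paths from the identity in $\Gamma$ end at the same vertex, and their difference in $C_1(\Gamma)$ defines a $1$-cycle $C_{m,n} \in H_1(\Gamma)$.

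The technical heart of the proof is to show that the family $\{C_{m,n}\}_{m, n \geq 1}$, together with its $M$-translates, requires infinitely many $\ZM$-generators. The approach is to construct a $\ZM$-equivariant ``footprint'' homomorphism $\Phi \colon H_1(\Gamma) \to N$ into a $\ZM$-module $N$ built from the semilattice $E(M)$ of idempotents, recording which pairs of incomparable idempotents $(e_m, f_n)$ are linked by a cycle. Distinct $(m,n)$ should yield footprints of distinct ``rectangular shape'' $[-n, m]$, so that a finite $\ZM$-generating family could only realise footprints of bounded combinatorial complexity under the $M$-action, contradicting the unboundedness of the $C_{m,n}$.

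The main obstacle is constructing the footprint invariant $\Phi$ rigorously: verifying $\ZM$-equivariance and well-definedness on $H_1(\Gamma)$, and proving the sensitivity statement that finitely many elements of $K_A$ have only bounded-complexity footprints under $\ZM$-translation. Topologically, $\Phi$ is best described by coning off the relations $e_m f_n = f_n e_m$ in $\Gamma$ to form an auxiliary $2$-complex carrying a natural $M$-action, and analysing the resulting $M$-equivariant cellular chain complex; this is where the ``topological'' content of the argument lies.
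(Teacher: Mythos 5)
Your reduction is the same as the paper's: both arguments observe that the cellular chain complex of the right Cayley graph $\Gamma = \Gamma(M,\{a,b\})$ gives a partial free resolution of $\Z$ that is finitely generated in degrees $0$ and $1$ (vertices form a free $M$-set on one generator, edges on two), so type left-$\FP_2$ forces $H_1(\Gamma)=\ker d_1$ to be a finitely generated $\ZM$-module. Your identification of the cycles coming from commuting idempotents as the culprit is also the right intuition. But the proof stops exactly where the theorem begins: you never construct the footprint homomorphism $\Phi$, never verify its $\ZM$-equivariance, and never prove the ``sensitivity statement'' that finitely many classes have bounded-complexity footprints under translation. You name these as the main obstacles rather than overcoming them, so what you have is a plan, not a proof. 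The danger is real, not just formal: left multiplication by $a$ and $b$ moves the cycles $C_{m,n}$ around in complicated ways (translating a geodesic in a chosen spanning tree need not give a geodesic, and translates of small cycles can a priori combine to produce large ones), so the claim that a finite generating set can only reach ``bounded rectangular shapes'' is precisely the assertion that must be established by hand. Moreover, coning off \emph{all} the relations $e_mf_n=f_ne_m$ kills exactly the homology classes you want to detect, so the target module $N$ and the map $\Phi$ need a genuinely careful definition that is absent here.

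For comparison, the paper's proof fills this gap concretely: it chooses a prefix-closed set of normal forms $x^ny^k$ ($0\le k\le n$) and $x^ny^kx^j$ ($0\le n<k$, $0\le j\le k$), takes the corresponding spanning tree $T$ of $\Gamma$, and computes that the non-tree edges are either edges inside strongly connected components (assigned weight $0$) or transition edges $x^ny^kx^k\xrightarrow{\,x\,}x^{n+1}y^{k+1}x^{k+1}$ (assigned weight $k$). Letting $W_k\le H_1(\Gamma)$ be generated by basis elements of weight at most $k$, one gets a strictly increasing chain $W_0\subsetneq W_1\subsetneq\cdots$ with union $H_1(\Gamma)$, and an explicit induction (tracking how $x$ and $y$ translate the tree geodesics, via identities such as $yx^ny^k=x^{n-1}y^nx^{n-k}$) shows each $W_k$ is a $\ZM$-submodule. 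A finitely generated module cannot be a union of a strictly increasing chain of submodules, which finishes the argument. Something playing the role of this filtration-plus-invariance computation is exactly what your proposal is missing.
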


Before proving this result we briefly review some facts about free inverse monoids and the representation of their elements via Munn trees. For a full account of this theory we refer the reader to~\cite[Chapter~6]{Lawson}. Let $X$ be a non-empty set and let $X^{-1}$ be a set disjoint from $X$ and in bijective correspondence with $X$ via $x \mapsto x^{-1}$. The free inverse monoid $\mathrm{FIM}(X)$ is defined to be $Y / \rho$ where $Y = (X \cup X^{-1})^*$ and  $\rho$ is the congruence generated by the set
\[
\{ (ww^{-1}w,w): w \in Y \}
\cup
\{ (ww^{-1} zz^{-1}, zz^{-1} ww^{-1}): w,z \in Y^* \}.
\]
For each word $u \in Y$ we associate a tree $\mathrm{MT}(u)$, called the \emph{Munn tree}, of $u$ where $u$ is obtained by tracing the word $u$ in the Cayley graph $\Gamma(\mathrm{FG}(X))$ of the free group $\mathrm{FG}(X)$ with respect to the generating set $X$. 
Recall that if $M$ is a monoid and $A\subseteq M$, then the (right) \emph{Cayley digraph} $\Gamma(M,A)$ of $M$ with respect to $A$ is the graph with vertex set $M$ and with edges in bijection with $M\times A$ where the directed edge (arc) corresponding to $(m,a)$ starts at $m$ and ends at $ma$.  
So $\mathrm{MT}(u)$ is a finite birooted subtree of  $\Gamma(\mathrm{FG}(X))$ with initial vertex $1$ and terminal vertex the reduced form $r(u)$ of the word $u$ in the free group. Munn's solution to the word problem in $\mathrm{FIM}(X)$ says that $u=v$ in $\mathrm{FIM}(X)$ if and only if $\mathrm{MT}(u) = \mathrm{MT}(v)$ as birooted trees.

Now we turn our attention to the special case of the free monogenic inverse monoid and the proof of Theorem~\ref{t:monogenic.case}. 
For the remainder of this article, let $M$ denote the free monogenic inverse monoid. 
Let $x$ be the free generator of $M$ and let $y$ denote its
(generalized) inverse. 
Let $\Gamma$ be the Cayley digraph of $M$ with
respect to the generating set $\{x,y\}$.  
Then $M$ acts on the left of
$\Gamma$ by cellular mappings.  The augmented cellular chain complex
of $\Gamma$ gives a partial resolution of the trivial module
\[C_1(\Gamma)\xrightarrow{\,\,d_1\,\,} C_0(\Gamma)\xrightarrow{\,\,\varepsilon\,\,} \mathbb Z\longrightarrow 0.\]
Moreover, since the vertices of $\Gamma$ form a free $M$-set on $1$ generator (the vertex $1$) and the edges form a free $M$-set on $2$ generators (the arrows $1\xrightarrow{\,\, x\,\,}x$ and $1\xrightarrow{\,\,y\,\,} y$), this is, in fact, a partial free resolution which is finitely generated in each degree.  Therefore, if $M$ is of type $\FP_2$, we must have that $\ker d_1=H_1(\Gamma)$ is finitely generated as a $\mathbb ZM$-module (by~\cite[Proposition~4.3]{BrownCohomologyBook}). So our goal now is to show that $H_1(\Gamma)$ is not finitely generated as a $\mathbb ZM$-module. We remark that $H_1(\Gamma)$ is isomorphic as a $\mathbb ZM$-module to the relation module of $M$ in the sense of Ivanov~\cite{Ivanov}; see~\cite[Section~6]{GraySteinberg3}.

If $p$ is a path in $\Gamma$, there is a corresponding element $\ov p$ of $C_1(\Gamma)$ which is the weighted sum of the edges traversed by $p$, where an edge receives a weight of $n-k$ if it is traversed $n$ times in the forward direction and $k$ times in the reverse direction.

If $T$ is a spanning tree for $\Gamma$ (and we will choose a particular one shortly), then $H_1(\Gamma)$ is a free abelian group with a basis in bijection with the directed edges of $\Gamma\setminus T$.  If $v,w$ are vertices, then $[v,w]$ will denote the geodesic in $T$ from $v$ to $w$.  The basis element $b_e$ of $H_1(\Gamma)$ corresponding to a directed edge $e$ of $\Gamma\setminus T$ is $\ov{[1,\iota(e)]e[1,\tau(e)]^{-1}}$ where $\iota,\tau$ denote the initial and terminal vertex functions, respectively.  If $p$ is a closed path in $\Gamma$, then the homology class of $\ov{p}$ is the weighted sum of the basis elements $b_e$ where the weight of $b_e$ is $n-k$ with $n$  the number of traversals of $e$ by $p$ in the forward direction and $k$ the number of traversals in the reverse direction.

Our spanning tree $T$ will come from a prefix-closed set of normal forms for $M$ based on a right-left-right sweep of the Munn tree of an element.
Note here that the Munn trees are all subtrees of the Cayley graph of the infinite cyclic group with respect to the generators $x$ and $y=x^{-1}$.
The idea is we first sweep to the right in the Munn-tree as far as possible, then to the left as far as possible, and then, if necessary, back to the right.

 We will consider Munn trees for $M$ of two types.  A Munn tree is of \emph{Type I} if all its edges appear to the right of the in-vertex (where we view the Munn tree as embedded in the Cayley graph of $\mathbb Z$ with the in-vertex at $0$); the Munn tree of the empty word is vacuously considered of Type I.  The normal form for such an element is $x^ny^k$ with $0\leq k\leq n$ where $n$ is the number of edges in the Munn tree and the out-vertex is $k$ to the left of the rightmost vertex.

 A Munn tree is of \emph{Type II} if it contains an edge to the left of the in-vertex. The normal form of such a Munn tree is of the form $x^ny^kx^j$ where $0\leq n<k$ and $0\leq j\leq k$.  This corresponds to the Munn tree with $n$ edges to the right of the in-vertex, $k-n$ edges to the left of the in-vertex and the out-vertex is $j$ to the right of the leftmost vertex.  Munn trees of each of the two possible types, together with their normal forms, are given in Figure~\ref{fig_MunnTrees}.

The following lemma is a routine computation.

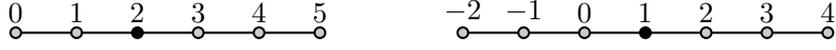
\begin{figure}
%
%
\begin{tikzpicture}[thick,scale=0.8]
\tikzstyle{lightnode}=[circle, draw, fill=black!20,
                        inner sep=0pt, minimum width=4pt]
\tikzstyle{darknode}=[circle, draw, fill=black!99,
                        inner sep=0pt, minimum width=4pt]
\draw (0,0) node[lightnode] {} --(1,0);
\draw (1,0) node[lightnode] {} --(2,0);
\draw (2,0) node[darknode] {} --(3,0);
\draw (3,0) node[lightnode] {} --(4,0);
\draw (4,0) node[lightnode] {} --(5,0) node[lightnode] {};
\draw (0,0) node[above] {$0$};
\draw (1,0) node[above] {$1$};
\draw (2,0) node[above] {$2$};
\draw (3,0) node[above] {$3$};
\draw (4,0) node[above] {$4$};
\draw (5,0) node[above] {$5$};
%
%
%
%
\end{tikzpicture} \quad \quad \quad
%
%
\begin{tikzpicture}[thick,scale=0.8]
\tikzstyle{lightnode}=[circle, draw, fill=black!20,
                        inner sep=0pt, minimum width=4pt]
\tikzstyle{darknode}=[circle, draw, fill=black!99,
                        inner sep=0pt, minimum width=4pt]
\draw (0,0) node[lightnode] {} --(1,0);
\draw (1,0) node[lightnode] {} --(2,0);
\draw (2,0) node[lightnode] {} --(3,0);
\draw (3,0) node[darknode] {} --(4,0);
\draw (4,0) node[lightnode] {} --(5,0);
\draw (5,0) node[lightnode] {} --(6,0)
 node[lightnode] {};
\draw (0,0) node[above] {$-2$};
\draw (1,0) node[above] {$-1$};
\draw (2,0) node[above] {$0$};
\draw (3,0) node[above] {$1$};
\draw (4,0) node[above] {$2$};
\draw (5,0) node[above] {$3$};
\draw (6,0) node[above] {$4$};
%
%
%
%
\end{tikzpicture}
\caption{The Munn tree of the left is of Type I and has normal form $x^5y^3$.
The Munn tree on the right is of Type II and has normal form  $x^4y^6x^3$.
In each example, the in-vertex is $0$ and the out-vertex is coloured in black.
} \label{fig_MunnTrees}
\end{figure}

\begin{Lemma}\label{l:norm.forms}
The set of elements of the forms $x^ny^k$ with $0\leq k\leq n$ and $x^ny^kx^j$ with $0\leq n<k$ and $0\leq j\leq k$ constitute a prefix-closed set of normal forms for $M$.
\end{Lemma}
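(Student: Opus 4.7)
The plan is to use Munn's solution to the word problem: a word in $M$ is determined by its Munn tree, so I need to show that every finite birooted subtree $\mathrm{MT}$ of the Cayley graph of $\mathbb Z$ (with in-vertex $0$) is realized by exactly one word from the specified list, and then check prefix closure directly.

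First I would observe that the underlying (unrooted) subtree is a finite interval $[a,b]\subseteq\mathbb Z$ with $a\le 0\le b$, and the out-vertex is some $c$ with $a\le c\le b$. I split into two cases. If $a=0$ (Type I), I trace the word $x^{b}y^{b-c}$: the $x^{b}$ part sweeps from $0$ to $b$ (visiting every edge of the tree), and then $y^{b-c}$ walks back to the out-vertex $c$. Setting $n=b$, $k=b-c$ gives a representative $x^ny^k$ with $0\le k\le n$, and its Munn tree is exactly $\mathrm{MT}$. If $a<0$ (Type II), I trace $x^{b}y^{b-a}x^{c-a}$: $x^{b}$ sweeps from $0$ to $b$, $y^{b-a}$ sweeps leftward all the way from $b$ to $a$ (filling in the left half), and $x^{c-a}$ moves right to the out-vertex. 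With $n=b$, $k=b-a$, $j=c-a$, the constraint $a<0$ gives $n<k$, the constraints $a\le c\le b$ give $0\le j\le k$, and the Munn tree is again $\mathrm{MT}$. This shows every element of $M$ has at least one representative of the specified form.

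Next I would verify uniqueness, which amounts to checking that the correspondence above is injective. From the word $x^ny^k$ of Type I one recovers the Munn tree as $[0,n]$ with out-vertex $n-k$, so the parameters $(n,k)$ are determined by the tree; similarly from $x^ny^kx^j$ of Type II one recovers $[n-k,n]$ with out-vertex $n-k+j$. The sign of $a=n-k$ distinguishes the two types (Type II has $a<0$, Type I has $a=0$), so no Type~I form equals a Type~II form in $M$. By Munn's theorem these words therefore represent distinct elements of $M$, and we have a set of normal forms.

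Finally I would verify prefix closure by a case check. The nontrivial prefixes of $x^ny^k$ (with $0\le k\le n$) are $x^i$ for $0\le i\le n$, which is the Type I form $x^iy^0$, and $x^ny^l$ for $0\le l\le k\le n$, which is again a Type I form. The prefixes of $x^ny^kx^j$ (with $0\le n<k$, $0\le j\le k$) are $x^i$ for $0\le i\le n$ (Type I), $x^ny^l$ for $0\le l\le k$ (Type I if $l\le n$, Type II of the shape $x^ny^lx^0$ if $n<l\le k$), and $x^ny^kx^m$ for $0\le m\le j$ (Type II). All cases land in the specified list, so the set is prefix-closed. There is no real obstacle here — as the authors say, this is routine once one has in hand the concrete picture of Munn trees on the line and Munn's theorem — the only thing to be careful about is the boundary case $n=l$ when reading a prefix $x^ny^l$ of a Type II form, where the representative shifts from Type I to Type II.
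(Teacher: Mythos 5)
Your proof is correct and follows exactly the route the paper intends: the paper gives no proof of this lemma (labelling it ``a routine computation''), and its surrounding discussion of Type I/Type II Munn trees and the right--left--right sweep is precisely the bijection between intervals $[a,b]\ni 0$ with a marked out-vertex and the listed words that you spell out. Your case analysis for prefix closure, including the boundary case where a prefix $x^ny^l$ with $n<l$ must be read as the Type II form $x^ny^lx^0$, is the right way to fill in the details.
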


Let $T$ be the spanning tree of $\Gamma$ corresponding to the set of normal forms in Lemma~\ref{l:norm.forms}.  Note that $[1,x^ny^k]$ consists of $n$ $x$-edges followed by $k$ $y$-edges for $0\leq k\leq n$ and $[1,x^ny^kx^j]$ consists of $n$ $x$-edges, followed by $k$ $y$-edges, followed by $j$ $x$-edges for $0\leq n<k$ and $0\leq j\leq k$.  Notice that $T$ is a directed spanning tree rooted at $1$.

 A directed edge of $\Gamma$ is called a \emph{transition edge} if its initial and terminal vertices are in different strongly connected components of $\Gamma$.    Edges of $T$ will be called \emph{tree edges}.

The following lemma is a straightforward computation with Munn trees.

\begin{Lemma}\label{l:basic.props}
The following equalities hold.
\begin{enumerate}
\item $x^ny^kx^{k+1}=x^{n+1}y^{k+1}x^{k+1}$ for $k>n$.
\item $yx^ny^k=x^{n-1}y^nx^{n-k}$ for $n\geq 1$ and $0\leq k\leq n$.
\item $yx^ny^k=x^{n-1}y^k$ if $0<n<k$.
\end{enumerate}
\end{Lemma}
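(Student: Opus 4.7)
My plan is to verify each of the three equalities by computing the Munn trees of the two sides and invoking Munn's solution to the word problem: two words over $\{x,y\}$ represent the same element of $M$ if and only if their Munn trees agree as birooted trees. Since $M$ is monogenic, every Munn tree is a subpath of the integer line $\mathbb{Z}$ (viewed as the Cayley graph of $\mathrm{FG}(\{x\})$), so a Munn tree is determined by three data: its leftmost vertex, its rightmost vertex, and the pair (in-vertex, out-vertex). The in-vertex will always be $0$, so in each case it suffices to track the set of integer edges that get traversed and the out-vertex (= value in $\mathbb{Z}$ of the reduced word).

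For each equality I would read the two words letter by letter, where an $x$ increments the current position and a $y$ decrements it, and any edge between the current position and the next is added to the tree. The template computation is:
\begin{itemize}
\item For (1): Reading $x^{n}y^{k}x^{k+1}$ with $k>n$, the rightmost vertex reached is $n+1$, the leftmost is $n-k$, and the out-vertex is $n+1$. Reading $x^{n+1}y^{k+1}x^{k+1}$ (where also $k+1>n+1$), the rightmost is $n+1$, the leftmost is $n-k$, and the out-vertex is $n+1$. The trees coincide.
\item For (2): Reading $yx^{n}y^{k}$ with $n\ge 1$ and $0\le k\le n$, the initial $y$ adds the edge from $-1$ to $0$, the $x^{n}$ stretches the tree up to $n-1$, and then $y^{k}$ backs up inside the tree to the out-vertex $n-1-k$. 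Reading $x^{n-1}y^{n}x^{n-k}$, the $x^{n-1}$ stretches to $n-1$, $y^{n}$ adds exactly one new edge on the left reaching $-1$, and $x^{n-k}$ walks back inside the tree to $n-k-1$. Both trees have edge span $[-1,n-1]$ and out-vertex $n-1-k$.
\item For (3): Reading $yx^{n}y^{k}$ with $0<n<k$, one gets edge span $[n-1-k,\,n-1]$ and out-vertex $n-1-k$. Reading $x^{n-1}y^{k}$, one gets the same edge span and the same out-vertex $n-1-k$.
\end{itemize}

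I would present this compactly: state that by Munn's theorem it suffices to compare Munn trees, then for each of the three items give the leftmost vertex, rightmost vertex, and out-vertex of each side and observe they agree. I do not foresee a genuine obstacle here; the only place one has to be careful is in respecting the inequalities in the hypotheses ($k>n$ in (1), $k\le n$ in (2), $n<k$ in (3)), since it is these inequalities that determine which end of the existing Munn tree gets extended by each block of letters and hence that the two sides trace out the same interval of $\mathbb{Z}$. This is why the lemma is labelled a routine computation.
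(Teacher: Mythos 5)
Your proposal is correct and is exactly the ``routine computation'' with Munn trees that the paper has in mind (the paper omits the proof entirely, but the surrounding text makes clear that Munn's solution to the word problem via birooted trees is the intended method). All three edge-span and out-vertex computations check out, including the boundary cases forced by the stated inequalities.
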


Now we describe which edges of $\Gamma$ are on $T$.

\begin{Prop}\label{p:tree.edges}
The following edges belong to $T$:
\begin{enumerate}
\item $x^n\xrightarrow{\,\,x\,\,} x^{n+1}$ with $n\geq 0$.
\item $x^ny^k\xrightarrow{\,\,y\,\,} x^ny^{k+1}$ with $k\geq 0$.
\item  $x^ny^kx^j\xrightarrow{\,\,x\,\,}x^ny^kx^{j+1}$ with $0\leq n<k$ and $0\leq j<k$.
\end{enumerate}
All remaining edges do not belong to $T$.
\end{Prop}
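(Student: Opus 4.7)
The plan is to exploit the defining property of the spanning tree $T$: since $T$ is rooted at $1$ and its geodesic $[1,m]$ spells out the normal form of $m$ from Lemma~\ref{l:norm.forms}, an edge $v\xrightarrow{a} w$ of $\Gamma$ lies in $T$ precisely when $a$ is the last letter of the normal form of $w$ and $v$ is the element represented by the preceding prefix. Because $T$ is a spanning tree rooted at $1$, every vertex $w\neq 1$ has exactly one incoming $T$-edge, so the proposition reduces to a bijection between the edges listed in (1)--(3) and the non-root vertices of $\Gamma$.

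First I would show that each of (1)--(3) is in $T$ by checking that appending the prescribed letter to the source's normal form yields a normal form. For (1), $x^n \cdot x = x^{n+1}$ stays Type~I with $k=0$. For (2), $x^n y^k \cdot y = x^n y^{k+1}$ is Type~I when $k+1\leq n$ and Type~II with $j=0$ when $k+1>n$; the borderline $k=n$ is the only place where the type switches. For (3), $x^n y^k x^j \cdot x = x^n y^k x^{j+1}$ remains Type~II because the hypothesis $j<k$ forces $j+1\leq k$. Then I would verify the converse by splitting non-root vertices according to the classification in Lemma~\ref{l:norm.forms}: $x^n$ with $n\geq 1$ is the target of $x^{n-1}\xrightarrow{x} x^n$ from (1); $x^n y^k$ with $k\geq 1$ (Type~I with $k\leq n$, or Type~II with $j=0$ and $k>n$) is the target of $x^n y^{k-1}\xrightarrow{y} x^n y^k$ from (2); and $x^n y^k x^j$ with $j\geq 1$ is the target of $x^n y^k x^{j-1}\xrightarrow{x} x^n y^k x^j$ from (3). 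This exhausts $M\setminus\{1\}$, and so (1)--(3) describe all $T$-edges.

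The step that requires the most care is the boundary case in (2) where appending $y$ to the Type~I normal form $x^n y^n$ produces the Type~II normal form $x^n y^{n+1}$ (with $j=0$), and symmetrically one must not overlook that Type~II elements with $j=0$ are covered by (2) rather than by (3). Otherwise the argument is a routine bookkeeping exercise from Lemma~\ref{l:norm.forms}, and the final clause ``all remaining edges do not belong to $T$'' follows automatically from the uniqueness of the incoming $T$-edge at each non-root vertex, without any need to compute the target of the non-tree edges (which would in any case involve Lemma~\ref{l:basic.props}).
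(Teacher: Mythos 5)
Your proposal is correct and follows essentially the same route as the paper, which states this proposition without proof as an immediate consequence of how the spanning tree $T$ is built from the prefix-closed normal forms of Lemma~\ref{l:norm.forms}: an edge lies in $T$ exactly when appending its label to the source's normal form yields the target's normal form, and the listed edges biject with the non-root vertices via the target map. Your attention to the Type~I/Type~II switch at $k=n$ in item (2) is exactly the one point worth checking.
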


Next we consider the edges of $\Gamma$ that do not belong to $T$.  We begin with non-transition edges.  We recall  that two elements of $M$ belong to same strongly connected component of the right Cayley graph of the monoid if and only if there is an isomorphism of their underlying Munn trees preserving their in-vertices (but not necessarily their out-vertices). This follows from the description of Green's $\mathscr{R}$ relation in free inverse monoids; see~\cite{Lawson}.

\begin{Prop}\label{p:strong.edges}
An edge of $\Gamma\setminus T$ belongs to a strongly connected component if and only if it is of one of the following two forms:
\begin{enumerate}
\item $x^ny^k\xrightarrow{\,\,x\,\,} x^ny^{k-1}$ with $0<k\leq n$;
\item $x^ny^kx^j\xrightarrow{\,\,y\,\,} x^ny^kx^{j-1}$ with $0\leq n<k$ and $0<j\leq k$.
\end{enumerate}
Moreover, if $e$ is as in (1), then
\[b_e=\ov{(x^ny^{k-1}\xrightarrow{\,\,y\,\,} x^ny^k)(x^ny^k\xrightarrow{\,\,x\,\,} x^ny^{k-1})}\]
and if $e$ is as in (2), then
\[b_e=\ov{(x^ny^kx^{j-1}\xrightarrow{\,\,x\,\,} x^ny^kx^{j})(x^ny^kx^j\xrightarrow{\,\,y\,\,} x^ny^kx^{j-1})}.\]
\end{Prop}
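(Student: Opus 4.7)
The plan is to enumerate the edges of $\Gamma\setminus T$ by complementing Proposition~\ref{p:tree.edges}, then to decide which of them are non-transition using the Munn-tree description of the $\mathscr{R}$-classes in $\mathrm{FIM}(X)$ recalled just before the statement, and finally to compute each $b_e$ by a direct cancellation in $C_1(\Gamma)$.

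Since each vertex of $\Gamma$ has exactly one outgoing $x$-edge and one outgoing $y$-edge, inspection of Proposition~\ref{p:tree.edges} shows that the edges of $\Gamma\setminus T$ fall into three families. Out of a Type~I vertex $x^ny^k$ with $0<k\le n$ there is one $x$-edge not in $T$, and a short Munn-tree computation gives its target as $x^ny^{k-1}$. Out of a Type~II vertex $x^ny^kx^j$ with $0<j\le k$ there is one $y$-edge not in $T$, landing at $x^ny^kx^{j-1}$. Finally, out of a Type~II vertex of the form $x^ny^kx^k$ there is one $x$-edge not in $T$ whose target, by Lemma~\ref{l:basic.props}(1), equals $x^{n+1}y^{k+1}x^{k+1}$.

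To separate transition from non-transition edges, I would invoke the fact recalled in the paragraph preceding the statement: two elements of $M$ lie in the same strongly connected component of $\Gamma$ iff their Munn trees are isomorphic as rooted trees (preserving the in-vertex). The first two families above leave the Munn-tree shape unchanged and only shift the out-vertex, so those edges sit inside a single $\mathscr{R}$-class; the third family instead grows the Munn tree by one edge to the right of the in-vertex, so those edges are transition edges. This yields the classification asserted in the proposition.

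For the basis elements, the key observation is that the geodesic $[1,x^ny^k]$ is the concatenation of $[1,x^ny^{k-1}]$ with the single tree edge $x^ny^{k-1}\xrightarrow{y} x^ny^k$, and analogously $[1,x^ny^kx^j]$ extends $[1,x^ny^kx^{j-1}]$ by the tree edge $x^ny^kx^{j-1}\xrightarrow{x} x^ny^kx^j$. Substituting these into the definition $b_e=\overline{[1,\iota(e)]\,e\,[1,\tau(e)]^{-1}}$, all but one tree edge cancels between the first and last portion, leaving precisely the displayed length-two closed path. The main thing to watch is getting the Munn-tree case analysis complete and correct across Types~I and~II; no individual computation is subtle, but that is where any slip would occur.
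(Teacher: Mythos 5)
Your proposal is correct and follows essentially the same route as the paper: a Munn-tree computation (together with the in-vertex-preserving-isomorphism characterisation of strongly connected components) to classify the non-tree edges, and the observation that $[1,x^ny^{k-1}]$ (resp.\ $[1,x^ny^kx^{j-1}]$) is an initial segment of $[1,x^ny^k]$ (resp.\ $[1,x^ny^kx^j]$) to reduce $b_e$ to the displayed length-two cycle. The paper's proof is just a terser version of the same argument; your enumeration by complementing Proposition~\ref{p:tree.edges} is the intended "straightforward computation".
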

\begin{proof}
Items (1) and (2) are straightforward computations with Munn trees.  The final statements follow by noticing that in the first case $[1,x^ny^{k-1}]$ is an initial segment of $[1,x^ny^k]$ and in the second case  $[1,x^ny^kx^{j-1}]$ is an initial segment of $[1,x^ny^kx^j]$.
\end{proof}

There is only one type of transition edge not belonging to $T$.

\begin{Prop}\label{p:trans.edges}
The transition edges of $\Gamma$ not belonging to $T$ are of the form $x^ny^kx^k\xrightarrow{\,\,x\,\,}x^{n+1}y^{k+1}x^{k+1}$ with $0\leq n<k$.  The corresponding basis element of $H_1(\Gamma)$ is
\[\ov{[x^n,x^{n}y^kx^k]} + (x^ny^kx^k\xrightarrow{\,\,x\,\,}x^{n+1}y^{k+1}x^{k+1})-\ov{[x^n,x^{n+1}y^{k+1}x^{k+1}]}.\]
\end{Prop}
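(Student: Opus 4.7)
The strategy is to enumerate the two outgoing edges from every vertex of $\Gamma$ (right-multiplication by $x$ and by $y$), use Lemma~\ref{l:basic.props} where necessary to put each target into normal form, and then sort each edge using Proposition~\ref{p:tree.edges} and Proposition~\ref{p:strong.edges}. Since every non-tree edge is either a transition edge or lies in a strongly connected component, the transition edges of $\Gamma\setminus T$ are precisely those that survive this sorting.

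I would proceed case-by-case on normal form. For a Type I vertex $x^ny^k$ with $0\le k\le n$, the $y$-edge goes to $x^ny^{k+1}$ and is a tree edge, while the $x$-edge goes to $x^{n+1}$ when $k=0$ (a tree edge) and to $x^ny^{k-1}$ when $k>0$ (a strong-component edge). For a Type II vertex $x^ny^kx^j$ with $0\le n<k$ and $0\le j\le k$, the $y$-edge goes to $x^ny^{k+1}$ when $j=0$ (a tree edge) and to $x^ny^kx^{j-1}$ when $j>0$ (a strong-component edge); the $x$-edge is a tree edge when $j<k$, and when $j=k$ it is the edge $x^ny^kx^k\xrightarrow{\,\,x\,\,}x^{n+1}y^{k+1}x^{k+1}$ (using Lemma~\ref{l:basic.props}(1) to identify $x^ny^kx^{k+1}$ with $x^{n+1}y^{k+1}x^{k+1}$). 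This last family is the only one not covered by Propositions~\ref{p:tree.edges} and~\ref{p:strong.edges}, and its edges are transition edges because the Munn trees of $x^ny^kx^k$ and $x^{n+1}y^{k+1}x^{k+1}$ have $k+1$ and $k+2$ vertices respectively and so are not isomorphic, hence these two elements are not $\mathscr{R}$-related.

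For the basis element, unpacking the definition of $b_e$ gives
\[b_e = \ov{[1,x^ny^kx^k]} + (x^ny^kx^k\xrightarrow{\,\,x\,\,}x^{n+1}y^{k+1}x^{k+1}) - \ov{[1,x^{n+1}y^{k+1}x^{k+1}]}.\]
Both $T$-geodesics $[1,x^ny^kx^k]$ and $[1,x^{n+1}y^{k+1}x^{k+1}]$ begin by traversing $1\to x\to\cdots\to x^n$, so each splits as $[1,x^n]$ followed by a path starting at $x^n$; the contribution $\ov{[1,x^n]}$ in the first summand cancels against $-\ov{[1,x^n]}$ from the third, giving the formula in the proposition. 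The only obstacle is the bookkeeping in the case analysis: one must check at each boundary value ($k=0$ in Type I and $j\in\{0,k\}$ in Type II) that every outgoing edge is accounted for exactly once, but there is no conceptual difficulty.
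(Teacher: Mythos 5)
Your proposal is correct and follows essentially the same route as the paper: the first claim is established by the same exhaustive Munn-tree/normal-form computation (using Lemma~\ref{l:basic.props}(1) to identify $x^ny^kx^{k+1}$ with $x^{n+1}y^{k+1}x^{k+1}$), and the basis-element formula is obtained by the same observation that $[1,x^n]$ is the common initial segment of the two tree geodesics, so its contribution cancels. The paper merely compresses this into a two-line remark; your case analysis and vertex-count argument for why these edges are genuinely transition edges fill in exactly the intended details.
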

\begin{proof}
The first statement is a straightforward application of Lemma~\ref{l:basic.props} and a Munn tree computation.  The second follows by noting that $[1,x^n]$ is a common initial segment of both $[1,x^ny^kx^k]$ and $[1,x^{n+1}y^{k+1}x^{k+1}]$.
\end{proof}

Our next goal is to assign a weight to the basis element $b_e$ of $H_1(\Gamma)$ corresponding to a directed edge $e$ of $\Gamma\setminus T$.   If $e$ belongs to a strongly connected component of $\Gamma$, then we give $b_e$ weight zero.  If $e$ is as in Proposition~\ref{p:trans.edges}, then we give $b_e$ weight $k$ (which is greater than $0$).  Let $W_k$ be the subgroup of $H_1(\Gamma)$ generated by the $b_e$ of weight at most $k$.  Then we have a strictly increasing chain of subgroups
\[W_0\subsetneq W_1\subsetneq W_2\subsetneq \cdots\]
with $\bigcup_{k\geq 0} W_k=H_1(\Gamma)$.   Our goal is to show that each $W_k$ with $k\geq 0$ is a $\mathbb ZM$-submodule.  Since a finitely generated module cannot be written as the union of a strictly increasing chain of submodules, this will prove that $H_1(\Gamma)$ is not a finitely generated $\mathbb ZM$-module and hence $M$ is not of type $\FP_2$.

We proceed by induction on $k$.

\begin{Prop}\label{p:base.case}
The subgroup $W_0$ is a $\mathbb ZM$-submodule of $H_1(\Gamma)$.
\end{Prop}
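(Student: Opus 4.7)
The plan is to reduce the statement to showing $m \cdot b_e \in W_0$ for each $m \in M$ and each generator $b_e$ of $W_0$; since $W_0$ is spanned as an abelian group by such elements and $\mathbb{Z}M$ is additively generated by $M$, this suffices. By Proposition~\ref{p:strong.edges}, every generator $b_e$ of $W_0$ is represented by a length-two closed walk consisting of an $x$-edge and a $y$-edge traversed in opposite directions between a pair of adjacent vertices $v, v'$ of $\Gamma$.

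Applying the cellular action of $m$ produces a length-two closed walk between $mv$ and $mv'$ with the same edge-label pattern: one $x$-edge in one direction and one $y$-edge in the other. In particular, $mv$ and $mv'$ are mutually reachable and hence lie in a common strongly connected component of $\Gamma$, so neither of the two edges of the translated walk is a transition edge.

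Decomposing the homology class of the translated walk in terms of basis elements $b_{e'}$, each of its two edges is either a tree edge (contributing nothing) or a non-tree edge falling under Proposition~\ref{p:strong.edges} rather than Proposition~\ref{p:trans.edges}, hence of weight zero. Therefore $m \cdot b_e$ is an integer combination of weight-zero basis elements and belongs to $W_0$. The only delicate point is confirming uniformly across the two cases of Proposition~\ref{p:strong.edges} that the translated walk has the required structure; this is immediate from the explicit formulas for $b_e$ given there. I do not foresee any further obstacle, as the argument reduces to the single geometric principle that two edges running in opposite directions between a pair of vertices witness their lying in the same strongly connected component.
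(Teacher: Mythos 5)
Your argument is correct and is essentially the paper's own proof: both rest on the observation that each weight-zero basis element $b_e$ is the class of a directed closed walk of length two, that the $M$-action sends directed closed walks to directed closed walks (so the translated edges lie in a single strongly connected component and are not transition edges), and that every edge of a strongly connected component is either a tree edge or a non-tree edge of weight zero. No substantive difference from the paper's reasoning.
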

\begin{proof}
By Proposition~\ref{p:strong.edges} if $e$ is an edge of weight zero, then $b_e=\ov{p}$ where $p$ is a directed cycle of length $2$.  But any translate of a closed directed path is a closed directed path and hence contained in a strongly connected component of $\Gamma$.  Since every edge of a strongly connected component either belongs to the tree $T$ or has weight zero, we see that $W_0$ is indeed a $\mathbb ZM$-submodule.
\end{proof}

The inductive step is much more technical.
\begin{Prop}\label{p:inductive.step}
For all $k\geq 0$, $W_k$ is a $\mathbb ZM$-submodule of $H_1(\Gamma)$.
\end{Prop}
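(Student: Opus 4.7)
The proof is by induction on $k$, the base case being Proposition~\ref{p:base.case}; so assume that $W_{k-1}$ is a $\mathbb{Z}M$-submodule of $H_1(\Gamma)$. Since $M$ is generated as a monoid by $\{x,y\}$ and every basis element of weight $<k$ already lies in $W_{k-1}\subseteq W_k$, it suffices to prove that $x\cdot b_e$ and $y\cdot b_e$ lie in $W_k$ for every weight-$k$ transition edge $e\colon x^ny^kx^k\to x^{n+1}y^{k+1}x^{k+1}$ with $0\leq n<k$. Writing $\gamma(m):=[1,m]$ for the $T$-geodesic and inserting and cancelling $\gamma(g)$, $\gamma(g\iota(e))$, $\gamma(g\tau(e))$ in the chain $g\cdot b_e$, one obtains the identity
\[
g\cdot b_e \;=\; \alpha(g,\iota(e)) \,+\, \epsilon\, b_{g\cdot e} \,-\, \alpha(g,\tau(e)),
\]
where $\epsilon=1$ if $g\cdot e\notin T$ and $\epsilon=0$ otherwise, and
\[
\alpha(g,m) \;:=\; \bigl[\, \overline{\gamma(g)} + g\cdot\overline{\gamma(m)} - \overline{\gamma(gm)} \,\bigr] \;\in\; H_1(\Gamma)
\]
is the class of the loop that goes from $1$ to $g$ along $T$, from $g$ to $gm$ along the $g$-translate of $\gamma(m)$, and from $gm$ back to $1$ along $T$. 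It suffices to show each of the three summands lies in $W_k$.

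For the middle summand, Lemma~\ref{l:basic.props} identifies $g\cdot e$: for $g=x$ it is the weight-$k$ transition edge $x^{n+1}y^kx^k\to x^{n+2}y^{k+1}x^{k+1}$ when $n+1<k$, and it collapses to the tree edge $x^k\to x^{k+1}$ when $n+1=k$; for $g=y$ it is the weight-$k$ transition edge $x^{n-1}y^kx^k\to x^ny^{k+1}x^{k+1}$ when $n\geq 1$, and collapses to the tree edge $y^{k+1}x^k\to y^{k+1}x^{k+1}$ when $n=0$. In every case $\epsilon\, b_{g\cdot e}\in W_k$. For the $\alpha$-terms the plan is to walk along the translated path $g\cdot\gamma(m)$ one edge at a time, put each intermediate vertex into normal form using Lemma~\ref{l:basic.props}, and classify every edge using Propositions~\ref{p:tree.edges}--\ref{p:trans.edges}. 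When $g=x$ this analysis shows that $\alpha(x,\iota(e))$ and $\alpha(x,\tau(e))$ are either zero or a sum of weight-$0$ basis elements arising from the $\mathcal{L}$-class collapse $x^{n+1}y^kx^k=x^k$ in the boundary case $n+1=k$ (respectively $n+2=k+1$), so both lie in $W_0\subseteq W_k$.

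The subtle case is $g=y$. The initial $x$-segment $1\to x\to\cdots\to x^j$ of $\gamma(m)$ (with $j=n$ when $m=\iota(e)$ and $j=n+1$ when $m=\tau(e)$) translates under $y$ to $y\to yx\to\cdots\to yx^j$; using the identity $yx^i=x^{i-1}y^ix^i$ (a special case of Lemma~\ref{l:basic.props}(2)), the successive edges normalize to the transition edges $x^{i-1}y^ix^i\to x^iy^{i+1}x^{i+1}$ of weight exactly $i$ for $1\leq i\leq j-1$, while the first edge $y\to yx$ is a tree edge. The remaining $y$- and $x$-segments of $g\cdot\gamma(m)$ contribute, by a case split on whether the moving out-vertex of the current Munn tree falls on a previously existing edge or extends the tree, only weight-$0$ basis elements inside $\mathcal{L}$-classes and tree edges. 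Since $j\leq n+1\leq k$, the maximum weight appearing in $\alpha(y,\iota(e))$ and $\alpha(y,\tau(e))$ is at most $n<k$, and therefore both lie in $W_{k-1}\subseteq W_k$ by the inductive hypothesis, completing the step. The main technical obstacle is exactly this case analysis for $g=y$: one must simultaneously follow the rightward growth of the Munn tree under successive $x$-translates (which produces transition edges of strictly increasing weight $1,2,\ldots,j-1$) and its leftward growth under $y$-translates (which stays confined to strongly connected components), and verify that no transition edge of weight $\geq k$ ever slips in — the strict inequality $n<k$ built into the definition of a weight-$k$ edge via Proposition~\ref{p:trans.edges} is precisely what makes the bound hold.
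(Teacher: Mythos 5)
Your argument is correct and is essentially the paper's proof: the same induction, the same reduction to showing $xb_e,yb_e\in W_k$ for the weight-$k$ transition edges $e$, and the same edge-by-edge classification of the translated geodesics via Lemma~\ref{l:basic.props} and Propositions~\ref{p:tree.edges}--\ref{p:trans.edges}; your decomposition $g\cdot b_e=\alpha(g,\iota(e))+\epsilon\,b_{g\cdot e}-\alpha(g,\tau(e))$ with loops based at $1$ merely repackages the paper's cancellation of the common prefix $[1,x^n]$, at the harmless cost of picking up the extra (weight $\leq n-1$, hence $<k$) transition edges along $y[1,x^n]$ that the paper never sees. One small slip: the strongly connected components of the right Cayley graph here are $\mathscr{R}$-classes, not $\mathcal{L}$-classes, but nothing in your argument depends on this.
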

\begin{proof}
Proposition~\ref{p:base.case} handles the base case of the induction.  Assume that $W_{k-1}$ is a $\mathbb ZM$-submodule and that $k\geq 1$. It suffices to prove that if $e$ is an edge of the form $x^ny^kx^k\xrightarrow{\,\,x\,\,}x^{n+1}y^{k+1}x^{k+1}$ with $0\leq n<k$ and $z\in \{x,y\}$, then $zb_e\in W_k$. By Proposition~\ref{p:trans.edges}, this means we need to show that $ze$ and edges of $z[x^n,x^ny^kx^k]$,  $z[x^n,x^{n+1}y^{k+1}x^{k+1}]$  are of weight at most $k$ or tree edges.

Let us start with $z=y$. In what follows, $x^{-1}$ should be interpreted as $y$; this situation arises when $n=0$.
We consider first $y[x^n,x^ny^kx^k]$.  Note that \[[x^n,x^ny^kx^k]=[x^n,x^ny^n][x^ny^n,x^ny^k][x^ny^k,x^ny^kx^k].\]  By Lemma~\ref{l:basic.props}, we have $yx^n = x^{n-1}y^nx^n$ and $yx^ny^n=x^{n-1}y^n$, which belong to the same strongly connected component.  Thus each edge of $y[x^n,x^ny^n]$ is either a tree edge or an edge of weight zero.  On the other hand, $y[x^ny^n,x^ny^k]$ is a string of $k-n$ $y$-edges from $x^{n-1}y^n$ to $yx^ny^k=x^{n-1}y^k$ (by Lemma~\ref{l:basic.props}) and these are all tree edges. Finally, $y[x^ny^k,x^ny^kx^k]$ is a string of $k$ $x$-edges from $x^{n-1}y^k$ to $x^{n-1}y^kx^k$.  Since $k>n>n-1$, these are again tree edges.

Next, we consider $y[x^n,x^{n+1}y^{k+1}x^{k+1}]$.  Write
\begin{align*}
[x^n,x^{n+1}y^{k+1}x^{k+1}] &= [x^n,x^{n+1}][x^{n+1},x^{n+1}y^{n+1}][x^{n+1}y^{n+1},x^{n+1}y^{k+1}]\\ &\qquad\qquad\cdot[x^{n+1}y^{k+1},x^{n+1}y^{k+1}x^{k+1}]
\end{align*}
 As $yx^n=x^{n-1}y^nx^n$ and  $yx^{n+1}=x^ny^{n+1}x^{n+1}$, by Lemma~\ref{l:basic.props}, we see that $y[x^n,x^{n+1}]=x^{n-1}y^nx^n\xrightarrow{\,\,x\,\,} x^ny^{n+1}x^{n+1}$ is an edge of weight $n<k$ (or a tree edge if $n=0$).  Since $yx^{n+1}=x^ny^{n+1}x^{n+1}$ and $yx^{n+1}y^{n+1} = x^ny^{n+1}$ (see Lemma~\ref{l:basic.props}) belong to the same strongly connected component, we have that $y[x^{n+1},x^{n+1}y^{n+1}]$ consists of tree edges and edges of weight zero.  Next, we have that the translate $y[x^{n+1}y^{n+1},x^{n+1}y^{k+1}]$ is a string of $k-n$ $y$-edges from $yx^{n+1}y^{n+1}=x^ny^{n+1}$ to $yx^{n+1}y^{k+1}=x^ny^{k+1}$, and all these edges are tree edges.  Finally, $y[x^{n+1}y^{k+1},x^{n+1}y^{k+1}x^{k+1}]$ is a string of $k+1$ $x$-edges from $yx^{n+1}y^{k+1}=x^ny^{k+1}$ to $yx^{n+1}y^{k+1}x^{k+1}= x^ny^{k+1}x^{k+1}$. These are again tree edges.

The translate $ye$ is $x^{n-1}y^kx^k\xrightarrow{\,\,x\,\,}x^ny^{k+1}x^{k+1}$, which is an edge of weight $k$, using that $n-1<k$, $yx^ny^kx^k=x^{n-1}y^kx^k$ and $yx^{n+1}y^{k+1}x^{k+1}=x^ny^{k+1}x^{k+1}$ by Lemma~\ref{l:basic.props}, unless $n=0$, in which case it is a tree edge.  This completes the argument that $yb_e\in W_k$.  So we next turn to $z=x$.  There are two cases, $k>n+1$ and $k=n+1$.

Assume first that $k>n+1$.  Then $x[x^n,x^ny^kx^k]=[x^{n+1},x^{n+1}y^kx^k]$ and $x[x^n,x^{n+1}y^{k+1}x^{k+1}]=[x^{n+1},x^{n+2}y^{k+1}x^{k+1}]$ consist of tree edges and $xe=x^{n+1}y^kx^k\xrightarrow{\,\,x\,\,}x^{n+2}y^{k+1}x^{k+1}$ is an edge of weight $k$.  Thus, in this case, $xb_e\in W_k$.

Finally, suppose that $k=n+1$.  Then $xx^ny^kx^k=x^{n+1}y^{n+1}x^{n+1}=x^{n+1}$. Therefore, $x[x^n,x^ny^kx^k]$ is a directed path from $x^{n+1}$ to $x^{n+1}$ and  hence uses only tree edges and edges of weight zero as it is contained in a strongly connected component.  Observe that $xx^{n+1}y^{k+1}x^{k+1}=x^{n+2}y^{n+2}x^{n+2}=x^{n+2}$.  Writing $[x^n,x^{n+1}y^{k+1}x^{k+1}]=[x^n,x^{n+1}][x^{n+1},x^{n+1}y^{k+1}x^{k+1}]$, we see that $x[x^n,x^{n+1}y^{k+1}x^{k+1}]$ is the concatenation of the tree edge $x^{n+1}\xrightarrow{\,\,x\,\,}x^{n+2}$ with a directed path from $x^{n+2}$ to itself and the latter path uses only tree edges and edges of weight zero as it is contained in a strongly connected component.  Also, we have that $xe=x^{n+1}\xrightarrow{\,\,x\,\,}x^{n+2}$ is a tree edge.  We conclude that $xb_e\in W_k$ in this case as well.  This completes the proof that $W_k$ is a $\mathbb ZM$-submodule of $H_1(\Gamma)$.
\end{proof}

Proposition~\ref{p:inductive.step} completes the proof of Theorem~\ref{t:monogenic.case} in light of the discussion preceding Proposition~\ref{p:base.case}.

\end{document}